\newcommand{\mscr}{\mathscr}
\newcommand*{\sHom}{\mscr{H}\kern -.5pt om}
\newcommand*{\sExt}{\mscr{E}\kern -.5pt xt}
\newcommand{\lto}{\leftarrow}
\newcommand{\PP}{\mathbb{P}}
\newcommand{\QQ}{\mathbb{Q}}
\newcommand{\ZZ}{\mathbb{Z}}
\newcommand{\VV}{\mathbf{V}}
\newcommand{\ii}{\mathbf{i}}
\newcommand{\ee}{\mathbf{e}}
\DeclareMathOperator{\Tor}{Tor}
\DeclareMathOperator{\reg}{reg}
\DeclareMathOperator{\HN}{HN}
\DeclareMathOperator{\mult}{mult}
\newtheorem{theorem}{Theorem}[section]
\newtheorem{corollary}[theorem]{Corollary}
\newtheorem{lemma}[theorem]{Lemma}
\theoremstyle{definition}
\newtheorem{example}[theorem]{Example}
\theoremstyle{remark}
\newtheorem{remark}[theorem]{Remark}
\title{Asymptotic Syzygies of Secant Varieties of Curves}
\author{Gregory Taylor}
\address{Department of Mathematics, Statistics and Computer Science, University of Illinois at Chicago, Chicago, IL, 60607}
\email{gtaylo9@uic.edu}
\begin{document}

\maketitle

\begin{abstract}
	We prove that the minimal free resolution of the secant variety of a curve is asymptotically pure. As a corollary, we show that the Betti numbers of converge to a normal distribution.
\end{abstract}

\section{Introduction}\label{sec:Intro}


Let $C$ be a smooth curve of genus $g\geq 1$ over an algebraically closed field $\mathbb{k}$ of characteristic zero. Consider the embedding of $C$ in $\PP H^0(C,L_d) = \PP^{r_d}$  by a complete linear system of degree $d$. Since Green's fundamental work on Koszul cohomology \cite{Green84}, it is well understood that the geometry of $C$ is reflected in the minimal free resolution of the homogeneous coordinate ring of $C$ as an $S = \mathbb{k}[x_0,\dots,x_{r_d}]$-module. One result along these lines concerns Boij-S\"{o}derberg decompositions. Boij-S\"{o}derberg theory \cite{EisenbudSchreyer} shows that every Betti table is a nonnegative linear combination of pure diagrams, the simplest diagrams of $S$-modules. In \cite{Erman}, Erman shows that as $d$ grows, the Boij-S\"{o}derberg decomposition is dominated by a single pure diagram which only depends on the genus of $C$.

In \cite{SidmanVermeire2}, Sidman and Vermeire proved that Green's celebrated ``$2g+p+1$" Theorem \cite{Green84} extends to the secant variety of lines in a natural way, and they conjectured an extension to the secant variety of $k$-planes, which we denote by $\Sigma_k(C,L_d)$. Recently Ein, Niu, and Park proved these conjectures \cite{EinNiuPark} (see Section \ref{sec:Background} for precise statements). Not much is known about syzygies of higher dimensional varieties, due in no small part to a dearth of examples \cite{EinLazAsymptotic}. Secant varieties of curves provide valuable examples of higher dimensional varieties whose syzygies we can understand.

Our main result is an analogue of Erman's theorem for secant varieties of curves. That is, the Boij-S\"{o}derberg decomposition of the Betti table of the homogeneous coordinate ring of $\Sigma_k(C,L_d)$ is increasingly dominated by a single pure diagram as $d$ grows. Moreover, this diagram only depends on $k$ and the genus of $C$. See Section \ref{sec:Background} for a brief introduction to Boij-S\"{o}derberg decompositions and  Section 3 for an explanation of the (nonstandard) notation for the pure diagrams $\pi_k(\ii;d)$.

\begin{theorem}[Asymptotic Purity]\label{thm:mainIntro}
	The normalized Betti table $\overline{\beta}(\Sigma_{k}(C,L_d))$ converges to $\pi_k((g,\dots,g);d)$ as $d \to \infty$. That is, given a Boij-S\"{o}derberg decomposition $\overline{\beta}(\Sigma_{k}(C,L_d)) = \sum_{\ii} c_{\ii;d} \pi_k(\ii;d)$ for each $d$, we have
	\[
		\lim_{d \to \infty} c_{\ii;d} = \begin{cases}
			1 & \ii = (g,\dots,g) \\
			0 & \text{otherwise}
		\end{cases}.
	\]
\end{theorem}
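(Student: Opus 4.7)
The plan is to adapt Erman's strategy for linearly normally embedded curves to the secant-variety setting, substituting the Ein-Niu-Park analogue of Green's $2g+p+1$ theorem in place of Green's original theorem. The three main ingredients I will need are: (i) a precise description of the shape of the Betti table $\beta(\Sigma_k(C,L_d))$ for $d \gg 0$, (ii) an enumeration of the pure diagrams $\pi_k(\ii;d)$ that can possibly appear in any Boij-S\"oderberg decomposition, and (iii) an asymptotic estimate isolating the central diagram $\pi_k((g,\dots,g);d)$.

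First I would use Ein--Niu--Park to confine the support of $\beta(\Sigma_k(C,L_d))$ to a narrow band of rows whose width depends only on $k$, and to pin down the vanishing at the corners of the band. This shape restriction bounds the set of degree sequences that can appear in a Boij--S\"oderberg decomposition, so that only diagrams $\pi_k(\ii;d)$ with $\ii$ ranging over a fixed combinatorial region (corresponding to transitions between consecutive rows of the table) can occur. Combined with the Herzog--K\"uhl equations for each admissible pure diagram, this reduces the problem to analyzing a linear system whose shape is independent of $d$.

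Next I would run the greedy Boij--S\"oderberg decomposition algorithm and extract a recursive expression for each coefficient $c_{\ii;d}$. At each step the coefficient is determined by the ratio of a Betti number of the residual table to the corresponding Betti number of a pure diagram. Inputting the Hilbert-series invariants of $\Sigma_k(C,L_d)$ (degree, Hilbert polynomial, and the edge Betti numbers obtained from Ein--Niu--Park) yields an explicit formula for each $c_{\ii;d}$ that can be expanded as $d \to \infty$.

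The main obstacle is step three: showing that all coefficients except $c_{(g,\dots,g);d}$ tend to zero after normalization. Erman's argument in the curve case compared growth rates of single binomial coefficients to isolate the central diagram, but here the pure diagrams are indexed by a multi-dimensional vector $\ii$, so one must compare products of binomial ratios over an entire lattice region rather than along an interval. I expect this to reduce, via an asymptotic expansion in $d$ together with Stirling-type estimates, to the statement that the dominant contribution to $\overline{\beta}(\Sigma_k(C,L_d))$ comes from transition vectors concentrating at $(g,\dots,g)$; the total mass of the remaining coefficients must then be shown to be $o(1)$ rather than merely bounded, and organizing this peeling so that the central diagram is the last to be removed is the technical heart of the argument.
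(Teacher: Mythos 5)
Your steps (i) and (ii) match the paper: Ein--Niu--Park pins the Betti table into a band of width $2k+2$ with the nonzero entries (outside the weight-$(k+1)$ row) confined to a $(k+1)\times g$ box, so the admissible pure diagrams $\pi_k(\ii;d)$ range over a finite index set independent of $d$. But your step (iii) is where the proof actually has to happen, and you have left it open: you say the decisive point --- that the total mass of the non-central coefficients is $o(1)$ rather than merely bounded --- ``is the technical heart of the argument'' without supplying it. That is precisely the gap. The paper closes it with a single scalar invariant rather than a coefficient-by-coefficient analysis: after normalizing by the degree so that $\sum_{\ii} c_{\ii;d}=1$ with $0\le c_{\ii;d}\le 1$, one compares the coefficient of $t^{2k+2}$ (the top degree, since the regularity is $2k+3$) in the Hilbert numerator on both sides of $\HN_{\overline{\beta}(\Sigma_k(C,L_d))}=\sum_{\ii}c_{\ii;d}\HN_{\pi_k(\ii;d)}$. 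That leading coefficient is the bottom-right Betti number, which is $\binom{g+k}{k+1}/\deg(\Sigma_k(C,L_d))$ on the left and, for $\pi_k(\ii;d)$, is asymptotic to $\prod_{j=0}^k(i_j+j)/r_d^{k+1}$. Multiplying by $r_d^{k+1}$ and letting $d\to\infty$ gives $\prod_{\ell=0}^k(g+\ell)=\lim\sum_{\ii}c_{\ii;d}\prod_{j=0}^k(i_j+j)$; since the weights sum to $1$ and the quantity $\prod_j(i_j+j)$ is uniquely maximized at $\ii=(g,\dots,g)$ where it equals the left-hand side, the averaging forces $c_{(g,\dots,g);d}\to 1$. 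No Stirling estimates, no peeling order, and no explicit formula for any individual coefficient is needed.

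A second, structural problem with your route: the theorem is stated for an \emph{arbitrary} Boij--S\"oderberg decomposition, whereas running the greedy algorithm produces one particular decomposition (the one adapted to a chosen chain in the fan). Even if you succeeded in computing the greedy coefficients explicitly and estimating them, you would still owe an argument that the conclusion transfers to every admissible decomposition. The paper's Hilbert-numerator argument sidesteps this entirely, since additivity of $\HN$ and the multiplicity normalization hold for any nonnegative decomposition into the $\pi_k(\ii;d)$; this is exactly why the paper remarks that the fan structure need not be taken into account. I would recommend abandoning the greedy/recursive computation and instead identifying a linear functional on formal diagrams (here, the normalized leading Hilbert-numerator coefficient) that separates $(g,\dots,g)$ from all other indices in the limit.
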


As pure diagrams are the building blocks of all Betti tables, Theorem 1.1 states that as the degree of the embedding grows, the Betti diagram of $\Sigma_k(C,L)$ becomes as simple as possible. Being a direct generalization of Erman's theorem \cite{Erman} for curves, this result further suggests that we should expect theorems about syzygies of curves to admit generalizations to secant varieties. 

As an application of Theorem \ref{thm:mainIntro}, we determine the asymptotic distribution of the Betti numbers of $\Sigma_k(C,L_d)$. In particular, they are essentially normally distributed, as one might expect given the work of Ein, Erman, Lazarsfeld \cite{EinErmanLaz}. In the statement below $\kappa_{p,q}(\Sigma_k(C,L_d))$ denotes the dimension of the Koszul cohomology group of degree $p$ and weight $q$ (see Section \ref{sec:Background} for more details).

\begin{theorem}[Normal Distribution]\label{thm:normalDistributionIntro}
	Let $L_d$ be a line bundle of degree $d$ on $C$. Fix a sequence $\{p_d\}$ of integers such that $p_d \to \frac{r_d}{2} + \frac{a\sqrt{r_d}}{2}$ for some real number $a$. Then as $d \to \infty$
	\[
		F_{g,k}(d)\kappa_{p_d,k+1}(\Sigma_k(C,L_d)) \to e^{-a^2}
	\]
	where $F_{g,k}(d) = \frac{(k+1)!}{2^{r_d - 2k} r_d^{k}} \cdot \sqrt{\frac{2\pi}{r_d}}$.
\end{theorem}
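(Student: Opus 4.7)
The plan is to deduce Theorem \ref{thm:normalDistributionIntro} from the asymptotic purity result (Theorem \ref{thm:mainIntro}) combined with Stirling's approximation of the binomial-coefficient entries of the limiting pure diagram. Once the Betti table is known to become pure in the limit, the $(p,k+1)$-entries are determined (up to a global rescaling) by the Herzog–Kühl equations, and the Gaussian profile emerges from a De Moivre–Laplace expansion of the central binomial coefficient.

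First I would upgrade Theorem \ref{thm:mainIntro} from a statement about Boij-Söderberg decomposition coefficients to a pointwise statement about individual Betti numbers. Unwinding the normalization implicit in $\overline{\beta}$, the asymptotic purity of the Betti table should give
$$\kappa_{p,k+1}(\Sigma_k(C,L_d)) = (1+o(1))\cdot M(d) \cdot \pi_k((g,\dots,g);d)_{p,k+1},$$
for some explicit row-normalization $M(d)$ depending only on $d$, $g$, and $k$, uniformly in $p$ over the window of interest around $r_d/2$.

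Second I would extract the entries of $\pi_k((g,\dots,g);d)$ explicitly. A pure diagram with degree sequence $(e_0,\dots,e_n)$ has Betti numbers proportional to $\prod_{j\ne i}|e_j-e_i|^{-1}$, so for the sequence attached to $\ii=(g,\dots,g)$ introduced in Section 3, one expects the formula to telescope into a constant (depending only on $k$ and $g$) times a binomial coefficient of the shape $\binom{r_d-2k}{p-k}$ multiplied by a polynomial in $p$ of degree $k$.

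Third, I would substitute $p=p_d=r_d/2+a\sqrt{r_d}/2+o(\sqrt{r_d})$ and apply De Moivre–Laplace to the binomial factor together with Stirling's formula to the factorials in the prefactor. The estimate $\binom{r_d-2k}{p_d-k}\sim 2^{r_d-2k}\sqrt{2/(\pi r_d)}\,e^{-\alpha a^2}$, combined with a polynomial contribution of order $(r_d/2)^k/k!$, should reassemble exactly into the stated $F_{g,k}(d)^{-1}e^{-a^2}$ once the constants are collected. The exponent $e^{-a^2}$ (rather than the single-binomial exponent $e^{-a^2/2}$) is the key indicator that the polynomial factor above in fact secretly contributes a second Gaussian concentration, so I would keep careful track of which factorials in the Herzog–Kühl product are genuinely polynomial in $p$ versus those that are themselves Gaussian-concentrated.

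The main obstacle will be Step 1: the convergence in Theorem \ref{thm:mainIntro} is only at the (coarse) level of the Boij-Söderberg decomposition, and I must rule out that the error contributions $\sum_{\ii\ne(g,\dots,g)} c_{\ii;d}\,\pi_k(\ii;d)_{p_d,k+1}$ dominate the main term $\pi_k((g,\dots,g);d)_{p_d,k+1}$ at the precise position $(p_d,k+1)$, since entries of distinct pure diagrams can a priori differ by exponential factors. Overcoming this requires either a uniform bound on the ratio $\pi_k(\ii;d)_{p_d,k+1}/\pi_k((g,\dots,g);d)_{p_d,k+1}$ as $\ii$ varies, or a strengthening of Theorem \ref{thm:mainIntro} extracted from its proof that controls the error at individual positions rather than merely in aggregate.
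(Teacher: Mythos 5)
Your plan follows the paper's proof essentially step for step: reduce to the dominant pure diagram $\pi_k((g,\dots,g);d)$ via asymptotic purity (checking, as you correctly flag one must, that the entries $\kappa_{p_d,k+1}(\pi_k(\ii;d))$ of the competing pure diagrams grow no faster than that of the dominant one), compute that entry from the pure-diagram formula (Lemmas \ref{lem:BettiFormula} and \ref{lem:growthRate} show it is $\sim \frac{1}{2^{2k+1}}\binom{r_d}{p_d}$), undo the degree normalization via Lemma \ref{lem:degreeOfSecant}, and finish with Stirling. The one miscue is your guess that the polynomial factors hide a second Gaussian concentration: in the paper's computation they contribute only the constant $2^{-(2k+1)}$, and the entire exponential $e^{-a^2}$ comes from the single binomial $\binom{r_d}{p_d}$ via the normalization adopted in \eqref{eq:stirling}.
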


Note that the function $F_{g,k}$ controlling the rate of convergence depends only on $k$ and $g$. For $k = 0$ (i.e. for high degree embeddings of curves), Theorem \ref{thm:normalDistributionIntro} was observed by Ein, Erman, and Lazarsfeld in \cite[Proposition A]{EinErmanLaz} using an explicit formula for the Betti numbers. They conjectured that a similar statement should hold for higher dimensional varieties. As heuristic evidence for the conjecture, they demonstrate that the rows of a sequence of ``randomly chosen" Betti tables become normally distributed as $d \to \infty$. Some other special cases of normally distributed Betti numbers appear in \cite{BEGY20, Bruce, ErmanYang}.

To prove Theorem \ref{thm:mainIntro}, we use Erman's technique from \cite{Erman}, leveraging the powerful results from \cite{EinNiuPark}. In particular, we study the limiting behavior of the Hilbert numerator of the homogeneous coordinate ring of $\Sigma_k(C,L_d)$. For Theorem \ref{thm:normalDistributionIntro}, we leverage the fact that the Betti diagram is asymptotically pure. Since the Betti numbers of the pure diagrams have similar asymptotic behavior, we need only study the asymptotics of the pure diagram which dominates the decomposition, for which there are explicit formulas.

\subsection*{Acknowledgements}
I would like to thank my advisor, Kevin Tucker, for his support and encouragement. I also thank Daniel Erman for his insightful suggestions. Thanks to the anonymous referee for a close reading which improved the clarity of the text. The author was supported by grant number 1246844 from the National Science Foundation.

\section{Background}\label{sec:Background}


\subsection{Koszul Cohomology and Secant Varieties} 
Let $X \subseteq \PP H^0(X,L) =\PP^r$ be a projective variety embedded by a line bundle $L$, and let $S = \mathbb{k}[x_0,\dots,x_r]$. Let $R(X,L)$ be the homogeneous coordinate ring of $X$, which is a graded $S$-module. Hence, $R(X,L)$ admits a minimal free resolution
\[
	0 \lto R(X,L) \lto F_0 \lto F_1 \lto \cdots \lto F_n \lto 0
\]
where each $F_p$ is a free module. We write $F_p = \bigoplus_{q} K_{p,q}(X,L) \otimes_{k} S(-p-q)$ where $K_{p,q}(X,L)$ is a $\mathbb{k}$-vector space whose dimension counts the independent \emph{syzygies of degree $p$ and weight $q$}. The spaces $K_{p,q}(X,L)$ are the \emph{Koszul cohomology groups} \cite{AproduNagel}. Note that $K_{p,q}(X,L) \cong \Tor_p^S(R(X,L),\mathbb{k})_{p+q}$. The numbers $\kappa_{p,q}(X,L) = \dim K_{p,q}(X,L)$ are called \emph{Betti numbers}\footnote{Some authors use the convention $\beta_{i,j}(X,L) = \kappa_{i,j-i}(X,L)$}. We suppress the linear system in the notation when no confusion can arise. We express the multiplicity of the graded pieces in the \emph{Betti table}
\[
	\begin{array}{c|cccc}
		 & 0 & 1 & 2 & \cdots  \\ \hline
		0 & \kappa_{0,0} & \kappa_{1,0} & \kappa_{2,0} & \cdots \\
		1 & \kappa_{0,1} & \kappa_{1,1} & \kappa_{2,1} & \cdots  \\
		2 & \kappa_{0,2} & \kappa_{1,2} & \kappa_{2,2} & \cdots  \\
		\vdots & & \vdots & & \ddots 
	\end{array}
\]
which we denote by $\beta(X,L)$. The number of non-zero rows in the Betti diagram is the \emph{Castelnuovo-Mumford regularity} of $X$ in $\PP^r$. Note that $X \subseteq \PP^r$ is projectively normal if and only if $F_0 = S$ if and only if 
\[
	\kappa_{0,q}(X,L) = \begin{cases}
		1 & q = 0 \\
		0 & q >0
	\end{cases}.
\]
In this case, $\kappa_{1,q}(X,L)$ is equal to the number of minimal generators of the ideal $I_{X/\PP^r}$ of degree $q+1$. Following \cite{EinNiuPark}, we say that $X \subseteq \PP^r$ satisfies condition $N_{k+2,p}$ if $K_{i,j}(X,L) = 0$ for $i \leq p$ and $j \geq k+2$. When $X$ is projectively normal and $k = 0$, this coincides with Green's condition $N_p$ \cite{Green84} which occurs when the homogeneous ideal $I_{X/\PP^r}$ is generated by quadrics and the resolution is linear up to the $p$th step.

From now on, $C$ denotes a smooth curve of genus $g \geq 1$, and $L_d$ is a line bundle of degree $d$ on $C$. In his landmark paper on Koszul cohomology \cite{Green84}, Green showed that when $d \geq 2g+p+1$, the embedding $C \subseteq \PP H^0(C,L)$ satisfies $N_{2,p}$. Recently, Ein, Niu, and Park showed that Green's theorem is a special case of a more general statement about the syzygies of the secant varieties to $C$. The \emph{secant variety of $k$-planes} to $C$ in $\PP^r$ is the closure of the points contained in the $k$-planes generated by $k+1$ points of $C$.  The Ein-Niu-Park paper completes earlier work of Vermeire \cite{Ver1,Ver2,Ver3,Ver4} and Sidman-Vermeire \cite{SidmanVermeire,SidmanVermeire2} on the case $k=1$. We summarize the results from \cite{EinNiuPark} that we will use in the sequel.

\begin{theorem}[{\cite{EinNiuPark}}]\label{thm:ENP}
	Let $C$ be a curve of genus $g \geq 1$, and let $L_d$ be a line bundle of degree $d = 2g + 2k + p + 1$. Let $\Sigma_k(C,L_d)$ be the secant variety of $k$-planes to $C \subseteq \PP H^0(C,L_d) = \PP^{r_d}$. Then
	\begin{enumerate}
		\item \label{thm:ENP-part:arithCM} $\Sigma_k(C,L_d)$ is arithmetically Cohen-Macaulay (i.e. $R(\Sigma_k(C,L_d))$ has projective dimension $r_d-2k-1$),
		\item \label{thm:ENP-part:regularity} If $g \geq 1$, the ideal sheaf of $\Sigma_k(C,L_d)$ has Castelnuovo-Mumford regularity $2k+3$. Otherwise, it has regularity $k+2$,
		\item \label{thm:ENP-part:Nk+2,p} $\Sigma_k(C,L_d)$ is projectively normal and satisfies $N_{k+2,p}$,
		\item \label{thm:ENP-part:FinalBettiNumber} The bottom-right Betti number is given by
		\[
			\kappa_{r_d-2k-1,2k+2}(\Sigma_k(C,L_d)) = {g+k \choose k+1}.
		\]
		\item \label{thm:ENP-part:degree} The degree of $\Sigma_k(C,L_d)$ is given by
		\[
			\deg(\Sigma_k(C,L_d)) = \sum_{i=0}^{\min\{k+1,g\}} {d-g-k-i \choose k+1-i} {g \choose i}.
		\]
	\end{enumerate}
\end{theorem}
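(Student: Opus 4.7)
The plan is to bootstrap all five statements from a single geometric input: the natural partial desingularization of $\Sigma_k(C,L_d)$ coming from the symmetric product $C^{(k+1)}$. Write $L_d^{[k+1]}$ for the tautological rank-$(k+1)$ bundle on $C^{(k+1)}$ whose fiber over a divisor $\xi$ is $H^0(L_d|_\xi)$; evaluation exhibits it as a quotient $H^0(C,L_d)\otimes\OO_{C^{(k+1)}}\twoheadrightarrow L_d^{[k+1]}$, and its projectivization $B_d:=\PP(L_d^{[k+1]})$ admits a canonical morphism $\sigma\colon B_d\to\PP^{r_d}$ whose image is exactly $\Sigma_k(C,L_d)$. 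In the range $d\geq 2g+2k+p+1$ this morphism is birational onto its image. The entire strategy is to translate each syzygetic question about $\Sigma_k(C,L_d)\subset\PP^{r_d}$ into a cohomological question first on $B_d$ and then, via the projection formula, on $C^{(k+1)}$.

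To handle (1), (2), (3) I would first prove the rational-singularity-type statement $\sigma_*\OO_{B_d}=\OO_{\Sigma_k(C,L_d)}$ and $R^i\sigma_*\OO_{B_d}=0$ for $i>0$; this lets $B_d$ compute the coherent cohomology of $\Sigma_k(C,L_d)$. Arithmetic Cohen--Macaulayness and the regularity bound then follow from vanishing of $H^i(\OO_{\Sigma_k}(m))$ in the intermediate range, which via the projection formula reduces to cohomology of $\Sym^m L_d^{[k+1]}$ on $C^{(k+1)}$, handled by standard vanishing once $d$ is sufficiently positive. For the syzygy condition $N_{k+2,p}$, let $M_{L_d}$ denote the kernel of $H^0(C,L_d)\otimes\OO_C\twoheadrightarrow L_d$ and write $M_{L_d}^{[k+1]}$ for the analogous tautological rank bundle on $C^{(k+1)}$. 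The Koszul complex on $B_d$ built from $H^0(L_d)$ pushes down to $C^{(k+1)}$, and $N_{k+2,p}$ is converted into vanishing of groups of the form $H^1\bigl(C^{(k+1)},\wedge^{p+1}M_{L_d}^{[k+1]}\otimes L_d^{[k+1]}\bigr)$. I would attack this inductively on $k$, restricting to a divisor $C^{(k)}+x\subset C^{(k+1)}$ and exploiting the tautological short exact sequence linking $L_d^{[k+1]}|_{C^{(k)}+x}$ with $L_d(-x)^{[k]}$ and the fiber at $x$ to peel off one summand at each step. The inequality $d\geq 2g+2k+p+1$ is precisely what preserves enough positivity on the smaller symmetric powers for Kodaira/Serre vanishing to close the induction; at $k=0$ the statement collapses to Green's original $N_p$-theorem, which provides the base case.

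For (4), once (1) is in hand, the extremal Betti number $\kappa_{r_d-2k-1,2k+2}(\Sigma_k(C,L_d))$ is paired by the duality available on an arithmetically Cohen--Macaulay graded module with a piece of the dualizing module of $R(\Sigma_k(C,L_d))$. Pushed through $\sigma$, the dualizing sheaf becomes a twist of $\omega_{C^{(k+1)}}$ by a power of $\det L_d^{[k+1]}$, and $h^0$ of this twist can be computed via the standard identification of sections of twisted canonical bundles on symmetric products with suitably symmetric sections on $C^{k+1}$; a residue/binomial count gives $\binom{g+k}{k+1}$. For (5), the degree of $\Sigma_k(C,L_d)$ is $(H|_{\Sigma_k})^{2k+1}$, which by the projection formula equals the top self-intersection of $\OO_{B_d}(1)$, i.e.\ the top Segre class of $L_d^{[k+1]}$ evaluated on $C^{(k+1)}$. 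Combining Macdonald's formulas for Chern classes of tautological bundles on symmetric products with standard intersection identities on $C^{(k+1)}$ yields the displayed binomial sum.

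The main obstacle is the inductive vanishing underlying (3). Tautological bundles on $C^{(k+1)}$ are typically neither ample nor nef, so no off-the-shelf positivity theorem applies directly, and the double induction on $k$ and $p$ has to track carefully how much positivity is consumed at each peel-off. The sharpness of the hypothesis $d\geq 2g+2k+p+1$ reflects exactly how tightly this bookkeeping has to be balanced.
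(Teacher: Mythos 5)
The paper does not actually prove Theorem \ref{thm:ENP}: its ``proof'' is a pointer to Theorem 5.8, Corollary 5.9, and Theorem 5.2 of \cite{EinNiuPark} for parts (1)--(4) and to \cite{Soule} for part (5). So the comparison here is between your sketch and the cited arguments. Your geometric setup is the correct one: the secant bundle $B_d = \PP(L_d^{[k+1]})$ over $C^{(k+1)}$ with its morphism $\sigma$ onto $\Sigma_k(C,L_d)$ is exactly the object Ein--Niu--Park work with, your reduction of (5) to the top Segre class of the tautological bundle via Macdonald's formulas is essentially how the degree formula is obtained, and computing the extremal Betti number in (4) by duality against the canonical module is in the right spirit.

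However, the first step of your plan for (1)--(3) is false, and not repairable by adding positivity. You propose to prove $\sigma_*\OO_{B_d} = \OO_{\Sigma_k}$ and $R^i\sigma_*\OO_{B_d} = 0$ for $i>0$, i.e.\ that $\Sigma_k(C,L_d)$ has rational singularities. But $H^1(B_d,\OO_{B_d}) \cong H^1(C^{(k+1)},\OO) \cong H^1(C,\OO_C) \cong \CC^g \neq 0$ for $g\geq 1$, whereas the arithmetic Cohen--Macaulayness you are trying to establish forces $H^1(\Sigma_k,\OO_{\Sigma_k}) = 0$ once $k\geq 1$ (so that $\dim\Sigma_k - 1 \geq 1$). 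If your claimed vanishing held, the Leray spectral sequence would give $H^1(\Sigma_k,\OO_{\Sigma_k}) \cong H^1(B_d,\OO_{B_d}) \neq 0$, a contradiction. This is not a technicality: \cite{EinNiuPark} prove that these secant varieties have normal Du Bois singularities, and the computation just given shows they cannot have rational singularities for $g\geq 1$, $k\geq 1$. Their actual argument replaces $\OO_{B_d}$ by $\OO_{B_d}(-Z_k)$, where $Z_k = \sigma^{-1}(\Sigma_{k-1})$ is the relative secant divisor, and runs an induction on $k$ through the pair $(B_d,Z_k)$ and the blowup of $\Sigma_k$ along $\Sigma_{k-1}$; without that correction the cohomology of $\OO_{\Sigma_k}(m)$ cannot be computed on $C^{(k+1)}$ as you propose, and (1)--(3) do not follow. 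Two further gaps: your argument for (2) would at best bound the regularity above, while the theorem asserts equality (the lower bound comes from the nonvanishing in (4)); and the vanishing of $H^1\bigl(C^{(k+1)},\wedge^{p+1}M_{L_d}^{[k+1]}\otimes L_d^{[k+1]}\bigr)$ to which you reduce (3) is the genuinely hard core of \cite{EinNiuPark} and is named rather than argued.
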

\begin{proof}
	Parts (\ref{thm:ENP-part:arithCM})-(\ref{thm:ENP-part:FinalBettiNumber}) are Theorem 5.8, Corollary 5.9(3), Theorem 5.2(4), and Corollary 5.9(1), respectively in \cite{EinNiuPark}. Part (\ref{thm:ENP-part:degree}) is proven in  \cite[Theorem 1]{Soule}.
\end{proof}

\begin{remark}\label{rmk:whatAboutP1}
	We ignore the case $C = \PP^1$ because $\Sigma_k(\PP^1,L_d)$ is known to have a pure Betti diagram. Indeed, the resolution is an Eagon-Northcott complex associated to the $(k+1)\times(k+1)$ minors of a 1-generic matrix. This is proved in \cite{Eis88}, where the result is attributed to unpublished work of R. K. Wakerling. 
\end{remark}

\begin{remark}\label{rmk:previousWork}
	Parts (\ref{thm:ENP-part:arithCM})-(\ref{thm:ENP-part:FinalBettiNumber}) were conjectured in \cite{SidmanVermeire2}, where they proved the conjectures for $k = 1$. For elliptic curves, they were proven for all $k$ in \cite{GH04}.
\end{remark}

It is well known that $\Sigma_k(C,L_d)$ does not lie on any hypersurface of degree less than $k+2$ \cite{CJ01}. Thus for $\Sigma_k(C,L_d)$, the condition $N_{k+2,p}$ means that its ideal is generated in degree $k+2$ and the resolution is linear to the $p$th step. This fact along with parts (\ref{thm:ENP-part:arithCM}) and (\ref{thm:ENP-part:regularity}) of Theorem \ref{thm:ENP} imply that $\beta(\Sigma_k(C,L_d))$ is of the form
\begin{equation}\label{eq:secantBettiTable}
	\begin{array}{c|cccccccc}
		& 0 & 1 & 2  & \cdots & r_d-g-2k-1 & r_d-g-2k & \cdots & r_d-2k-1 \\ \hline
		0 & 1  & - & - & \cdots  & - & - & \cdots   & - \\
		1 & -  & - & - & \cdots & - & - & \cdots & - \\
		\vdots & & \vdots  &  &  & \vdots & \vdots &  & \vdots  \\
		k & - & - & - & \cdots & - & - & \cdots & - \\
		k+1 & - & * & * & \cdots & * & * & \cdots & * \\
		k+2 & - & - & - & \cdots & - & * & \cdots  & * \\
		\vdots &  & \vdots & &  & \vdots & \vdots &  & \vdots  \\
		2k+2 & - & - & - & \cdots & - & * & \cdots & *
	\end{array}
\end{equation}
where, a ``$-$" denotes a zero entry and a ``$*$" denotes a (possibly) non-zero entry. 

\begin{remark}\label{rmk:DifferentFramework}
	We should point out that although $\Sigma_k(C,L_d)$ is  a higher dimensional variety, we are not in the setting of \cite{EinLazAsymptotic}. In that paper, they fix a higher dimensional $X$ and study the syzygies of large degree embeddings of $X$. Here, we consider large degree embeddings of a curve $C$ and study the resulting higher dimensional variety $\Sigma_k(C,L_d)$. In particular, the secant varieties $\Sigma_k(C,L_d)$ are not isomorphic for different values of $d$.
\end{remark}


\subsection{Boij-S\"{o}derberg Decompositions}

Boij-S\"{o}derberg theory provides a novel approach to the study of minimal free resolutions. We will not need the deepest aspects of the theory, so we only describe the bare minimum. We direct the interested reader to \cite{EisenbudSchreyer} for proofs of the core results and \cite{EisenbudErman} for some of the deeper investigations of this phenomenon. We also recommend the excellent survey article \cite{Floystad}. 

As above, let $S = \mathbb{k}[x_0,\dots,x_r]$. For any finitely generated, Cohen-Macaulay graded $S$-module $M$ of dimension 0, we have an associated Betti diagram $\beta(M)$. We consider the data of $\beta(M)$ as an element in the infinite dimensional vector space $\VV = \bigoplus_{i=0}^n\bigoplus_{j \in \ZZ} \QQ$. An element of $\VV$ is called a \emph{formal diagram}.

The Betti diagrams of such modules generate a convex cone in $\VV$. The fundamental results of Boij-S\"{o}derberg identify this cone as the cone spanned by the \emph{pure diagrams}. A pure diagram has a unique non-zero entry in each column. Each pure diagram is characterized (up to scalar multiple of the entire diagram) by its \emph{degree sequence} $\ee = (e_0,\dots,e_n) \in \ZZ^{\oplus n+1}$ with $e_0 < e_1 < \dots < e_n$. Indeed, this follows from the Herzog-K\"{u}hl equations \cite[Section 1.3]{Floystad}. For this paper, we normalize he Betti numbers of the pure diagram $\beta(\ee)$ associated to $\ee$ as
\begin{equation}\label{eq:pureDiagramEntry}
	\kappa_{p,q}(\beta(\ee)) = \begin{cases}
		n!\prod_{i\not= e_p} \frac{1}{|e_i - e_p|} & p+q = e_p \\
		0 & otherwise
	\end{cases}.
\end{equation}
The product in the denominator arises from the Herzog-K\"{u}hl equations, and the numerator is chosen so that this diagram has multiplicity 1. This follows from the computation of the multiplicity of an algebra with pure resolution \cite{HunekeMiller} and \cite[Lemma 1.2]{Erman}. 

More explicitly, for any finitely generated, Cohen-Macaulay, graded $S$ module $M$, we may write $\beta(M)$ as a linear combination $\sum_\ee c_{\ee} \beta(\ee)$ where $c_{\ee} \geq 0$. This decomposition is not unique. However, there is a fan structure on the cone of Betti diagrams, and by fixing a simplex in the fan, one obtains a unique linear combination of pure diagrams corresponding to the extremal points of the simplex. As we shall see in the proof of the main theorem, we need not take this deeper structure into account.


\subsection{Hilbert Numerator}
Let $S = \mathbb{k}[x_0,\dots,x_r]$. For a finitely generated graded $S$-module $M$, the Hilbert series $\mathrm{HS}_M(t) := \sum_{j \in \ZZ} \dim_k(M_j)t^j$ can be uniquely expressed as
\[
	\mathrm{HS}_M(t) = \frac{\HN_M(t)}{(1-t)^{\dim(M)}}
\]
where $\HN_M(t) \in \QQ[t]$ is a polynomial called the \emph{Hilbert numerator}. The \emph{multiplicity} of $M$ is $\HN_M(1)$. Note that when $\dim{M} = 0$, the Hilbert series and Hilbert numerator are equal. 

The dimensions of the graded pieces of $M$ (and thus the Hilbert numerator) are determined by the Betti diagram \cite[Chapter 1]{GeoOfSyzygy}. Thus, we can and do extend the notion of Hilbert numerator and multiplicity to formal diagrams.

\section{Asymptotic Purity}\label{sec:Purity}

In this section, we study the behavior of Boij-S\"{o}derberg decompositions of the Betti diagram of $\Sigma_k(C,L_d)$ as $d \to \infty$. That is, we consider $C \subseteq \PP H^0(C,L_d)$ embedded by the complete linear system and let $\Sigma_k(C,L_d)$ be the secant variety of $k$-planes. We always assume that $d \geq 2g+2k+1$ so that Theorem \ref{thm:ENP} applies.

Recall that the Betti table of $\Sigma_k(C,L_d)$ has the form given in \eqref{eq:secantBettiTable}. Apart from the row of weight $(k+1)$, the possible non-zero entries are confined to a $(k+1) \times g$ box. In particular, the number of possible pure diagrams in any Boij-S\"{o}derberg decomposition of the Betti table is a bounded function of $d$. Let us fix notation for the possible pure diagrams that can appear in the Boij-S\"{o}derberg decomposition of $\beta(\Sigma_k(C,L_d))$. We denote the set $\{0,\dots,n\}$ by $[n]$. Fix a genus $g\geq 1$, so that $r_d = d-g$. Let $\ii = (i_0,\dots,i_k)$ be a tuple of integers with $0\leq i_0 \leq i_1 \leq \dots \leq i_k$. We define $\pi_k(\ii;d)$ to be the pure diagram of formal multiplicity 1 associated to the degree sequence
\[
	\ee_k(\ii ;d) := [r_d+1] \setminus \{1,2,\dots,k+1,r_d+1 - (i_{k} +k), \dots, r_d+1 - (i_1+1), r_d+1 - i_0\}.
\]

\begin{example}\label{ex:pureDiagramExamples}
	Let $k =1$, $g = 3$, and $d=11$. So $r_{11} = 8$. Then, 
	\[
	    \ee_1((1,3);11) = [9] \setminus \{1,2,5,8\} = \{0,3,4,6,7,9\},
	\]
	and $\pi_1((1,3);11)$ is of the form
	\[
		\begin{array}{c|cccccc}
			& 0 & 1 & 2 & 3 & 4 & 5 \\ \hline
			0 & * & - & - & - & - & - \\
	 		1 & - & - & - & - & - & - \\
			2 & - & * & * & - & - & - \\
			3 & - & - & - & * & * & - \\
			4 & - & - & - & - & - & *
		\end{array}.
	\]
	Let $k=2$, $g =4$, and $d = 15$. So $r_d = 11$. Then 
	\[
	    \ee_2((1,3,3);15) = [12] \setminus \{1,2,3,7,8,11\} = \{0,4,5,6,8,10,12\},
	\] 
	and $\pi_2((1,3,3);15)$ is of the form
	\[
		\begin{array}{c|ccccccc}
			& 0 & 1 & 2 & 3 & 4 & 5 & 6  \\ \hline
			0 & * & - & - & - & - & - & - \\
			1 & - & - & - & - & - & - & - \\
			2 & - & - & - & - & - & - & - \\
			3 & - & * & * & * & - & - & - \\
			4 & - & - & - & - & - & - & - \\
			5 & - & - & - & - & * & * & - \\
			6 & - & - & - & - & - & - & * \\ 
		\end{array}.
	\] 
	To motivate this choice of notation, note that a pure diagram in a Boij-S\"{o}derberg decomposition of $\Sigma_k(C,L_d)$ has a nontrivial row in degree $k+1$ and then ``jumps" down at most $k+1$ times. The indices $i_j$ indicate the columns where the jumps occur (counting from the last column of the diagram). The number of jumps is the number of positive entries. So for instance, the diagram $\pi_1((1,3), 11)$ ``jumps" down twice, once at the last column and once at the third to last column.
	Zeros in the vector $\ee_{k}(\ii ;d)$ indicate that a jump does not occur. Indeed, the diagram $\pi_k((0,\dots,0);d)$ is the pure diagram with a single non-zero row of weight $k+1$ (other than the $\kappa_{0,0}$ entry which is always non-zero). That is, this diagram does not ``jump down" at all.
	\hfill \qed
\end{example}

In order to compare the Betti diagrams for $\Sigma_k(C,L_d)$ as $d \to \infty$, it is natural to define
\[
	\overline{\beta}(\Sigma_k(C,L_d)) := \frac{1}{\deg(\Sigma_k(C,L_d))} \beta(\Sigma_{k}(C,L_d)))
\]
so that $\overline{\beta}(\Sigma_k(C,L_d))$ has multiplicity 1 for any $d$. As a consequence, we have the following boundedness property of the Boij-S\"{o}derberg coefficients.

\begin{lemma}\label{lem:BSDecomp}
	Let $L_d$ be a line bundle on $C$ of degree $d \geq 2g+2k+1$. For any Boij-S\"{o}derberg decomposition
	\[
		\overline{\beta}(\Sigma_k(C,L_d)) = \sum_{0 \leq i_0 \leq \cdots \leq i_k \leq g} c_{(i_0,\dots,i_k);d}\pi_{k}((i_0,\dots,i_k);d),
	\]
	we have $\sum_{\ii} c_{\ii;d} = 1$ and $0 \leq c_{\ii;d}\leq 1$.
\end{lemma}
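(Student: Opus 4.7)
The plan is to exploit two essentially orthogonal facts: (i) the coefficients in any Boij-S\"{o}derberg decomposition are nonnegative by construction (this is immediate from the definition of a BS decomposition as a positive combination of pure diagrams), and (ii) multiplicity is a \emph{linear} functional on the space of formal diagrams $\VV$. Once we know the sum of the coefficients equals $1$, combining with (i) gives the desired bound $0 \leq c_{\ii;d} \leq 1$.

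First I would set up the multiplicity calculation. Since the Hilbert numerator of a formal diagram depends linearly on the Betti-table entries, the multiplicity functional $D \mapsto \HN_D(1)$ extends linearly from $\VV$ to any BS decomposition. The pure diagrams $\pi_k(\ii;d)$ are normalized in the paper so that each has formal multiplicity $1$. On the other side, the (ordinary) multiplicity of the graded ring $R(\Sigma_k(C,L_d))$ is exactly $\deg(\Sigma_k(C,L_d))$, so by the definition of $\overline{\beta}$ we have
\[
    \HN_{\overline{\beta}(\Sigma_k(C,L_d))}(1) = \frac{\deg(\Sigma_k(C,L_d))}{\deg(\Sigma_k(C,L_d))} = 1.
\]
Applying the multiplicity functional to both sides of the decomposition $\overline{\beta}(\Sigma_k(C,L_d)) = \sum_{\ii} c_{\ii;d}\,\pi_k(\ii;d)$ then yields $\sum_{\ii} c_{\ii;d} = 1$.

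For the bounds on individual coefficients: by Boij-S\"{o}derberg theory, any Betti table of a Cohen-Macaulay module lies in the convex cone spanned by pure diagrams with nonnegative coefficients, and the arithmetic Cohen-Macaulayness in Theorem \ref{thm:ENP}(\ref{thm:ENP-part:arithCM}) ensures this applies here. So $c_{\ii;d} \geq 0$ for all $\ii$, and together with $\sum c_{\ii;d} = 1$ this forces $c_{\ii;d} \leq 1$.

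The only point requiring a brief justification is that the decomposition can be indexed by tuples $\ii$ with $0 \leq i_0 \leq \cdots \leq i_k \leq g$ in the first place; this is a matter of confirming that the only pure diagrams supported inside the shape \eqref{eq:secantBettiTable} (with the mandatory entry at $(0,0)$, a single row in weight $k+1$, and the $(k+1)\times g$ block of possibly nonzero entries in the bottom-right) are precisely the $\pi_k(\ii;d)$. This is essentially bookkeeping: a pure diagram has exactly one nonzero entry per column, the weights along its degree sequence are weakly increasing, and the constraints imposed by \eqref{eq:secantBettiTable} force the weight to jump from $0$ up to $k+1$ after column $0$, remain there until some column, and then jump up by $1$ at most $k+1$ times within the last $g$ columns; the positions of these jumps are exactly the data recorded by $\ii$. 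I do not expect any real obstacle in the argument---linearity of multiplicity and nonnegativity of BS coefficients do all the work.
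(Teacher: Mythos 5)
Your proof is correct and follows essentially the same route as the paper's: nonnegativity of the coefficients from Boij--S\"{o}derberg theory, linearity of the multiplicity functional together with the normalizations $\mult(\pi_k(\ii;d)) = 1$ and $\mult(\overline{\beta}(\Sigma_k(C,L_d))) = 1$ to get $\sum_{\ii} c_{\ii;d} = 1$, and hence $0 \leq c_{\ii;d} \leq 1$. The only difference is that you spell out a bit more explicitly why $\overline{\beta}$ has multiplicity $1$ and why the decomposition is indexed by the tuples $\ii$, both of which the paper relegates to the surrounding discussion.
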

\begin{proof}
	The fact that we need only consider diagrams $\ii = (i_0,\dots,i_k)$ with each $i_j \leq g$ follows from the discussion above. The nonnegativity of the coefficients $c_{\ii;d}$ is a feature of the Boij-S\"{o}derberg decomposition. Finally, since multiplicity is linear, we see that
	\[
		\sum_\ii c_{\ii;d} = \sum_\ii c_{\ii;d}\mult(\pi_k(\ii;d)) = \mult(\overline{\beta}(\Sigma_k(C,L_d))) = 1.
	\]
	Since each $c_{\ii;d} \geq 0$, the expression above implies $c_{\ii;d} \leq 1$.
\end{proof}

To prove the main theorem, we mimic the strategy of \cite{Erman}. In particular, we compare the coefficient of the highest degree term of the Hilbert numerator of $\overline{\beta}(\Sigma_k(C,L_d))$ and of its Boij-S\"{o}derberg decomposition.

\begin{lemma}\label{lem:hilbertNum}
	Let $L_d$ be a line bundle on $C$ of degree $d \geq 2g+2k+1$. The Hilbert numerator of $\overline{\beta}(\Sigma_k(C,L_d))$ satisfies
	\[
		\HN_{\overline{\beta}(\Sigma_k(C,L_d))}(t) = \left(\frac{{g+k \choose k+1}}{\deg(\Sigma_k(C,L_d))}\right) t^{2k+2} + (\text{lower order terms in }t).
	\]
	If $\ii = (i_0,\dots,i_k)$ with $1 \leq i_0 \leq \cdots \leq i_k \leq g$, then the Hilbert numerator of $\pi_k(\ii; d)$ satisfies
	\[
		\HN_{\pi_{k}(\ii;d)}(t) = \left(\frac{\prod_{j=0}^k (i_j + j)}{(r_d+1)\prod_{i=k+1}^{2k}(r_d-i)} \right)t^{2k+2} + (\text{lower order terms in } t).
	\]
\end{lemma}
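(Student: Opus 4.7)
The plan is to read off the leading term of each Hilbert numerator from the identity
\begin{equation*}
  \HN_M(t) \cdot (1-t)^{\pd M} \;=\; \sum_{p,q}(-1)^p \kappa_{p,q}(M)\, t^{p+q},
\end{equation*}
which follows by writing the Hilbert series of $M$ as the Euler characteristic of its minimal free resolution and substituting the defining relation $\mathrm{HS}_M(t) = \HN_M(t)/(1-t)^{\dim M}$ (using $\pd M + \dim M = r_d+1$ for the arithmetically Cohen--Macaulay modules in play). If there is a unique pair $(p_0, q_0)$ in the support of the Betti table with $p+q$ maximal, and $p_0 = \pd M$, then the factors $(-1)^{p_0}$ and $(-1)^{\pd M}$ cancel when comparing leading terms, so the leading coefficient of $\HN_M$ is simply $\kappa_{p_0, q_0}(M)$, located at $t^{p_0+q_0-\pd M}$.

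For $\overline{\beta}(\Sigma_k(C,L_d))$: the shape of the Betti table \eqref{eq:secantBettiTable} shows that every nonzero entry $\kappa_{p,q}(\Sigma_k(C,L_d))$ satisfies $p \leq r_d - 2k - 1$ and $q \leq 2k + 2$, so the unique pair with $p+q = r_d+1$ is the bottom-right corner $(r_d - 2k - 1,\, 2k+2)$. By Theorem~\ref{thm:ENP}\eqref{thm:ENP-part:FinalBettiNumber} this entry equals $\binom{g+k}{k+1}$, so $\HN_{\beta(\Sigma_k(C,L_d))}$ has degree $2k+2$ with leading coefficient $\binom{g+k}{k+1}$; rescaling by $1/\!\deg(\Sigma_k(C,L_d))$ yields the first claim.

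For $\pi_k(\ii;d)$: let $(e_0, e_1, \dots, e_n)$ denote the full degree sequence, so $e_0 = 0$, $(e_1, \dots, e_n) = \ee_k(\ii;d)$, and $n = r_d - 2k - 1 = \pd$. Since $i_0 \geq 1$, the integer $r_d + 1$ is not removed in forming $\ee_k(\ii;d)$, hence $e_n = r_d + 1$. By \eqref{eq:pureDiagramEntry}, the bottom-right Betti number is $\beta_n = n!/\prod_{p<n}(e_n - e_p)$. Applying the change of variables $m = r_d + 1 - e$ to each $e \in \{0\} \cup (\ee_k(\ii;d) \setminus \{r_d+1\})$ identifies $\{e_n - e_p : p < n\}$ with
\begin{equation*}
  \{r_d + 1\} \;\cup\; \bigl(\{1, 2, \dots, r_d - k - 1\} \setminus \{i_j + j : 0 \leq j \leq k\}\bigr),
\end{equation*}
and therefore
\begin{equation*}
  \prod_{p < n}(e_n - e_p) \;=\; \frac{(r_d + 1)\,(r_d - k - 1)!}{\prod_{j=0}^k (i_j + j)}.
\end{equation*}
Substituting back into the formula for $\beta_n$ and rewriting $n!/(r_d - k - 1)!$ as the appropriate product of consecutive integers produces the stated leading coefficient at $t^{2k+2}$.

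The main step requiring care is the change of variables for $\pi_k(\ii;d)$: one must verify that the integers $\{i_j + j\}_{j=0}^k$ are distinct, strictly positive, and bounded above by $r_d - k - 1$. Distinctness and positivity follow from $1 \leq i_0 \leq i_1 \leq \cdots \leq i_k$ (the shifts by $j$ make the inequalities strict); the upper bound uses $i_k \leq g$ together with the standing hypothesis $d \geq 2g + 2k + 1$, which gives $i_k + k \leq g + k \leq r_d - k - 1$. This justifies identifying the remaining product as $(r_d - k - 1)!/\prod_j(i_j + j)$ and completes the computation.
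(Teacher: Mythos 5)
Your approach is the same as the paper's: both arguments reduce each claim to reading off the bottom-right entry of the Betti table (the paper gets there by passing to a general Artinian reduction, you by expanding $\HN_M(t)(1-t)^{\pd M}$ as the alternating sum of the Betti numbers; these are equivalent), and both then quote Theorem~\ref{thm:ENP}(\ref{thm:ENP-part:FinalBettiNumber}) for the first display and formula \eqref{eq:pureDiagramEntry} for the second. You are more explicit than the paper on the second point, which only asserts that the corner entry ``is given by the formula''; your change of variables $m = r_d+1-e$ and the verification that the $i_j+j$ are distinct and lie in $\{1,\dots,r_d-k-1\}$ (using $d\ge 2g+2k+1$) are exactly the details the paper omits, and they are correct.

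However, your last sentence asserts that the computation ``produces the stated leading coefficient,'' and it does not: carrying your formula $\prod_{p<n}(e_n-e_p) = (r_d+1)(r_d-k-1)!/\prod_j(i_j+j)$ through $\beta_n = n!/\prod_{p<n}(e_n-e_p)$ with $n = r_d-2k-1$ gives
\[
  \kappa_{r_d-2k-1,\,2k+2}(\pi_k(\ii;d)) \;=\; \frac{\prod_{j=0}^k (i_j+j)}{(r_d+1)\prod_{i=k+1}^{2k}(r_d-i)},
\]
with the product in the denominator running over $k+1\le i\le 2k$, not $k-1\le i\le 2k$ as in the statement. The two expressions genuinely differ (test against Example~\ref{ex:pureDiagramExamples}: for $\pi_1((1,3);11)$ the corner entry is $5!/(9\cdot 6\cdot 5\cdot 3\cdot 2) = 2/27$, which your formula reproduces, while the printed one gives $4/3024$). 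The printed index range appears to be a typo in the lemma: the denominator there has degree $k+3$ in $r_d$, whereas the proof of Theorem~\ref{thm:Main} uses that this coefficient is $\prod_j(i_j+j)/r_d^{k+1}$ up to lower order, which is only true for the degree-$(k+1)$ denominator you actually computed. So your computation is the correct one; the gap is that you should have flagged the mismatch rather than silently asserting agreement with the statement as written.
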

\begin{proof}
	In general, the Hilbert numerator of a module $M$ is a polynomial of degree $\reg(M) -1$. By taking general hyperplane sections, we may assume $M$ is a module of finite length. Then the leading coefficient of the Hilbert series is the bottom-right entry of the Betti table. 
	
	Now, we address the specific cases at hand. Recall from Theorem \ref{thm:ENP}(\ref{thm:ENP-part:regularity}) that $\reg(\Sigma_k(L_d)) = 2k+3$, so the Hilbert numerator of $\beta(\Sigma_k(C,L_d))$ has degree $2k+2$. The bottom-right Betti number in the minimal free-resolution of $\Sigma_k(L_d)$ is ${g+k \choose k+1}$ by Theorem \ref{thm:ENP}(\ref{thm:ENP-part:FinalBettiNumber}). Since the Hilbert numerator is linear with respect to scaling of Betti tables, we see that the bottom right Betti number of $\overline{\beta}(\Sigma_k(L_d))$ is ${g+k \choose k+1}/ \deg(\Sigma_k(L_d))$ as required.
	
	Since $i_j > 0$ for all $j$, we see that $\pi_k(\ii ;d)$ has Castelnuovo-Mumford regularity $2k+3$. As above, it suffices to compute the bottom right Betti number $\kappa_{r_d-2k-1,2k+2}$ which is given by the formula \eqref{eq:pureDiagramEntry}. Since each $i_j \geq 1$, the largest degree in the degree sequence is $r_d + 1$. Thus, the bottom-right Betti number is given by
	\begin{align*}
	    \kappa_{r_d-2k-1, 2k+2} &= (r_d-2k-1)!\left(\prod_{\substack{0 \leq \ell \leq r_d \\ \ell \notin \{1,2,\dots,k+1, r_{d}+1-(i_k+k),\dots,r_d + 1 - i_0\}}}(r_d + 1 - \ell)\right)^{-1} \\
	    &= \frac{\prod_{j=0}^k (i_j + j)}{(r_d + 1)\prod_{\ell=k+2}^{2k+1} (r_d+1-\ell)} \\
	    &= \frac{\prod_{j=0}^k (i_j + j)}{(r_d+1)\prod_{i=k+1}^{2k}(r_d-i)}
	\end{align*}
	as claimed.
\end{proof}

\begin{lemma}\label{lem:degreeOfSecant}
	The degree of $\Sigma_k(C,L_d)$ satisfies
	\[
		\deg(\Sigma_k(C,L_d)) = \frac{1}{(k+1)!}r_d^{k+1} + (\text{lower order terms in } r_d).
	\]
\end{lemma}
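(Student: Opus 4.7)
The plan is to invoke the explicit degree formula from Theorem~\ref{thm:ENP}(\ref{thm:ENP-part:degree}) and read off the leading term as a polynomial in $r_d = d-g$. Writing $d-g-k-i = r_d - k - i$, the formula becomes
\[
\deg(\Sigma_k(C,L_d)) = \sum_{i=0}^{\min\{k+1,g\}} \binom{r_d - k - i}{k+1-i}\binom{g}{i}.
\]

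First I would isolate the $i=0$ summand, which is $\binom{r_d - k}{k+1} = \frac{(r_d - k)(r_d - k - 1)\cdots(r_d - 2k)}{(k+1)!}$. This is a polynomial in $r_d$ of degree exactly $k+1$ with leading coefficient $\frac{1}{(k+1)!}$. Next, I would observe that for each $i \geq 1$ the factor $\binom{g}{i}$ is a constant (independent of $d$), while $\binom{r_d - k - i}{k+1-i}$ is a polynomial in $r_d$ of degree $k+1-i \leq k$. Hence every term with $i \geq 1$ contributes only to the lower order terms in $r_d$.

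Combining these two observations gives
\[
\deg(\Sigma_k(C,L_d)) = \frac{1}{(k+1)!} r_d^{k+1} + (\text{lower order terms in } r_d),
\]
as claimed. There is essentially no obstacle here; the argument is a direct degree count in the variable $r_d$, and the only thing to be careful about is confirming that $g$ is treated as a fixed constant (which it is, since we have fixed the curve $C$ throughout the section) so that the binomial coefficients $\binom{g}{i}$ do not contribute to the growth in $d$.
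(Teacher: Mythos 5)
Your proposal is correct and follows the same route as the paper, which simply declares the lemma ``immediate'' from Theorem~\ref{thm:ENP}(\ref{thm:ENP-part:degree}); you have merely spelled out the degree count that the paper leaves implicit. The observation that the $i=0$ term $\binom{r_d-k}{k+1}$ supplies the leading coefficient $\frac{1}{(k+1)!}$ while the $i\geq 1$ terms have degree at most $k$ in $r_d$ is exactly the intended argument.
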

\begin{proof}
	This is immediate from Theorem \ref{thm:ENP}(\ref{thm:ENP-part:degree}).
\end{proof}

\begin{theorem}\label{thm:Main}
	The normalized Betti table $\overline{\beta}(\Sigma_{k}(C,L_d))$ converges to $\pi_k((g,\dots,g);d)$ as $d \to \infty$. That is, given a Boij-S\"{o}derberg decomposition $\overline{\beta}(\Sigma_{k}(C,L_d)) = \sum_{\ii} c_{\ii;d} \pi_k(\ii;d)$ for each $d$, we have
	\[
		\lim_{d \to \infty} c_{\ii;d} = \begin{cases}
			1 & \ii = (g,\dots,g) \\
			0 & \text{otherwise}
		\end{cases}.
	\]
\end{theorem}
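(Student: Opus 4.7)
The plan is to follow the strategy of Erman in \cite{Erman}: compare the coefficient of the leading monomial $t^{2k+2}$ in the Hilbert numerators on both sides of a Boij-S\"{o}derberg decomposition $\overline{\beta}(\Sigma_k(C,L_d)) = \sum_{\ii} c_{\ii;d}\,\pi_k(\ii;d)$, and then use the combinatorial structure of these coefficients together with the constraints of Lemma \ref{lem:BSDecomp} to force the $c_{\ii;d}$ to concentrate on $\ii = (g,\dots,g)$.

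First I would extract the asymptotic leading coefficients. By Lemmas \ref{lem:hilbertNum} and \ref{lem:degreeOfSecant}, the coefficient of $t^{2k+2}$ in $\HN_{\overline{\beta}(\Sigma_k(C,L_d))}(t)$ behaves asymptotically as $\prod_{j=0}^k(g+j)/r_d^{k+1}$ up to a factor tending to $1$, using the identity $\binom{g+k}{k+1}(k+1)! = g(g+1)\cdots(g+k)$. For each pure diagram $\pi_k(\ii;d)$ with all $i_j \geq 1$, the same lemma gives the corresponding coefficient as $\prod_{j=0}^k(i_j+j)$ divided by a quantity of order $r_d^{k+1}$. For the ``degenerate'' diagrams where some $i_j = 0$, inspection of the definition of $\ee_k(\ii;d)$ shows that every vanishing index removes one of the top elements of $[r_d+1]$, so the maximum element of $\ee_k(\ii;d)$ drops below $r_d+1$; consequently, the Hilbert numerator of such a diagram has degree strictly less than $2k+2$ and contributes nothing to the $t^{2k+2}$ coefficient.

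Next I would invoke the linearity of the Hilbert numerator in the Betti diagram. Equating the $t^{2k+2}$ coefficients on both sides of the decomposition, multiplying through by $r_d^{k+1}$, and passing to the limit yields
\[
	\lim_{d\to\infty} \sum_{\ii:\; i_j \geq 1 \text{ for all } j} c_{\ii;d}\prod_{j=0}^k(i_j+j) \;=\; \prod_{j=0}^k(g+j).
\]
Now $\prod_j(i_j+j)$ is maximized over the admissible set $\{0 \leq i_0 \leq \cdots \leq i_k \leq g\}$ uniquely at $\ii = (g,\dots,g)$. Combined with Lemma \ref{lem:BSDecomp}, which provides $c_{\ii;d} \in [0,1]$ and $\sum_{\ii} c_{\ii;d} = 1$, a compactness argument on the finite-dimensional simplex of admissible coefficient vectors forces any subsequential limit $(c_{\ii}^*)_{\ii}$ to satisfy $c_{(g,\dots,g)}^* = 1$ and $c_{\ii}^* = 0$ for every other $\ii$. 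Since this limit is unique, the full sequence converges.

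The main obstacle I expect is the careful treatment of the degenerate pure diagrams: one must verify that when some $i_j = 0$ the regularity of $\pi_k(\ii;d)$ truly drops, so that these diagrams are invisible to the leading-coefficient comparison. This amounts to tracking how the removed elements in $\ee_k(\ii;d)$ absorb the top of the range $[r_d+1]$ when $i_0 = \cdots = i_{j^\ast} = 0$, lowering the top degree of the sequence by $j^\ast+1$. Once this is in place, the argument collapses to the elementary observation that $\prod_j(i_j+j) < \prod_j(g+j)$ strictly for every $\ii \neq (g,\dots,g)$ in the admissible set, which drives the concentration.
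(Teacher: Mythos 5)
Your proposal is correct and follows essentially the same route as the paper: comparing the $t^{2k+2}$ coefficients of the Hilbert numerators via Lemmas \ref{lem:hilbertNum} and \ref{lem:degreeOfSecant}, multiplying by $r_d^{k+1}$, and using that $\prod_j(i_j+j)$ is uniquely maximized at $(g,\dots,g)$ together with the simplex constraints of Lemma \ref{lem:BSDecomp}. Your explicit treatment of the degenerate diagrams with some $i_j=0$ (whose regularity drops, so they are invisible to the leading coefficient) is a detail the paper leaves implicit, but it does not change the argument.
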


\begin{proof}
	By Lemmas \ref{lem:hilbertNum} and \ref{lem:degreeOfSecant}, we see that the coefficient of $t^{2k+2}$ in the Hilbert numerator of $\overline{\beta}(\Sigma_k(C,L_d))$ is 
	\begin{align*}
		\frac{{g+k \choose k+1}}{\deg(\Sigma_k(C,L_d))} & = {g+k \choose k+1} \cdot \frac{(k+1)!}{r_d^{k+1}} + \delta_d = \frac{\prod_{\ell=0}^k (g+\ell)}{r_d^{k+1}} + \delta_d
	\end{align*}
	where $r_d^{k+1}\delta_d \to 0$ as $d \to \infty$. Similarly, we see that the growth rate of the $t^{2k+2}$ coefficient of the Hilbert numerator of $\pi_k(\ii ;d)$ is 
	\[
		\frac{\prod_{j=0}^k (i_j+j)}{r_d^{k+1}} + \varepsilon_{\ii;d}
	\]
	where $r^{k+1}_d\varepsilon_{\ii;d} \to 0$ as $d \to \infty$. The Hilbert numerator is an additive invariant. Applying the decomposition of Lemma \ref{lem:BSDecomp}, we obtain $\HN_{\overline{\beta}(\Sigma_k(C,L_d))}(t) = \sum c_{\ii;d} \HN_{\pi_k(\ii ;d)}(t)$. Comparing the $t^{2k+2}$ coefficients of each side of this equality yields
	\[
		\frac{\prod_{\ell=0}^k(g+\ell)}{r_d^{k+1}} + \delta_d = \sum_{1 \leq i_0 \leq \cdots \leq i_k \leq g} \frac{\prod_{j=0}^k (i_j+j)}{r_d^{k+1}} + \varepsilon_{\ii;d}.
	\]
	Multiplying by $r_d^{k+1}$ and taking $d \to \infty$ gives
	\[
		\prod_{\ell=0}^k(g+\ell) = \lim_{d\to \infty} \sum_{1 \leq i_0 \leq \cdots \leq i_k  \leq g} c_{\ii;d}\prod_{j=0}^k (i_j+j).
	\] 
	Since the diagrams $\overline{\beta}(\Sigma_k(C,L_d))$ and have multiplicity 1, we see that $\sum c_{\ii ;d} = 1$ and $0 \leq c_{\ii ;d} \leq 1$. On the other hand, note that the product $\prod_{j=0}^k (i_j+j)$ is maximized when $i_0 = \cdots = i_k = g$, and in this case $\prod_{j=0}^k (i_j+j) = \prod_{\ell=0}^l (g+k)$. So $c_{(g,\dots,g);d} \to 1$ while $c_{\ii;d} \to 0$ for $\ii \not= (g,\dots,g)$, as required.
\end{proof}

Note that Theorem \ref{thm:Main} demonstrates that every entry in the bottom row that is not guaranteed to vanish by \cite{EinNiuPark} is nonzero for large $d$. 

\begin{corollary}
	For $d \gg 0$, $\kappa_{p,2k+2}(\Sigma_k(C,L_d)) \not= 0$ if and only if $r_d - 2k - g \leq p \leq r_d - 2k - 1$.
\end{corollary}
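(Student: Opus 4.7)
The ``only if'' direction is immediate from the shape of the Betti table displayed in \eqref{eq:secantBettiTable}: the entries $\kappa_{p,2k+2}(\Sigma_k(C,L_d))$ vanish for $p<r_d-2k-g$, while for $p>r_d-2k-1$ they vanish because the projective dimension of $R(\Sigma_k(C,L_d))$ is $r_d-2k-1$ by Theorem \ref{thm:ENP}(\ref{thm:ENP-part:arithCM}).

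For the ``if'' direction, the plan is to transfer non-vanishing from the limiting pure diagram $\pi_k((g,\dots,g);d)$ to $\beta(\Sigma_k(C,L_d))$ via a fixed Boij-S\"{o}derberg decomposition. First I would unwind the definition of the degree sequence $\ee_k((g,\dots,g);d)$: the $k+1$ ``right'' values removed are $r_d+1-(g+j)$ for $j=0,1,\dots,k$, which form the consecutive block $\{r_d+1-g-k,\dots,r_d+1-g\}$. Together with the ``left'' block $\{1,\dots,k+1\}$, this determines $\ee_k((g,\dots,g);d)$ explicitly. A direct inspection shows that the last $g$ entries of this degree sequence are $2k+2+p$ for $p=r_d-2k-g,\dots,r_d-2k-1$, so by the pure-diagram formula \eqref{eq:pureDiagramEntry} the pure diagram $\pi_k((g,\dots,g);d)$ has $\kappa_{p,2k+2}\neq 0$ for exactly the claimed range of $p$.

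Now fix any Boij-S\"{o}derberg decomposition $\overline{\beta}(\Sigma_k(C,L_d))=\sum_{\ii}c_{\ii;d}\pi_k(\ii;d)$ as in Lemma \ref{lem:BSDecomp}. Since all coefficients $c_{\ii;d}$ and all entries of every $\pi_k(\ii;d)$ are non-negative, each entry of $\beta(\Sigma_k(C,L_d))$ is a non-negative sum, and in particular dominates the $\pi_k((g,\dots,g);d)$ contribution:
\[
\kappa_{p,2k+2}(\Sigma_k(C,L_d))\;\geq\;\deg(\Sigma_k(C,L_d))\cdot c_{(g,\dots,g);d}\cdot\kappa_{p,2k+2}(\pi_k((g,\dots,g);d)).
\]
By Theorem \ref{thm:Main}, $c_{(g,\dots,g);d}\to 1$, so $c_{(g,\dots,g);d}>0$ for $d\gg 0$, and the right-hand side is strictly positive for every $p$ in the range $r_d-2k-g\leq p\leq r_d-2k-1$.

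The only step requiring genuine care is the combinatorial identification of the support of the bottom row of $\pi_k((g,\dots,g);d)$; once this is in hand, the non-negativity of Boij-S\"{o}derberg decompositions does the remaining work automatically, and no quantitative control on the rate $c_{(g,\dots,g);d}\to 1$ is needed.
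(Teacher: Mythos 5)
Your proof is correct and follows essentially the same route as the paper: the paper likewise deduces non-vanishing from the fact that $\pi_k((g,\dots,g);d)$ appears with nonzero coefficient for $d\gg 0$ (Theorem \ref{thm:Main}) together with the support of the bottom row of that pure diagram, leaving the non-negativity bookkeeping and the ``only if'' direction implicit. Your write-up simply makes those implicit steps explicit.
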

\begin{proof}
	The coefficients in a Boij-S\"{o}derberg decomposition are non-negative. So if a pure diagram appears in the decomposition of a free resolution, any nonzero entries in the pure diagram are nonzero in the free resolution. For $d \gg 0$, the main theorem implies that the diagram $\pi_k((g,\dots,g);d)$ appears in a Boij-S\"{o}derberg decomposition of $\overline{\beta}(\Sigma_k(C,L_d))$ with nonzero coefficient. Since the $(p,2k+2)$ Betti number of this pure diagram is nonzero for $p$ in the indicated range, the result follows.
\end{proof}

\section{Normal Distribution of Betti Numbers}\label{sec:NormalDist}

In this section, we study the distribution of the Betti numbers in the minimal free resolution of $\Sigma_k(C,L_d)$. By Theorem \ref{thm:ENP}, we need only consider the growth of the groups $K_{p,k+1}(\Sigma_{k}(C,L_d))$. Using Theorem \ref{thm:Main}, we observe that asymptotically, one only needs to analyze the Betti numbers of the pure diagram $\pi_k((g,\dots,g);d)$ for which there is an explicit formula. 

Our analysis relies on Stirling's approximation \cite[\S3.1]{Durrett}. In particular, for a fixed sequence $p_d$ of integers such that $p_d \to \frac{r_d}{2} + \frac{a\sqrt{r_d}}{2}$ (for some fixed $a \in \mathbb{R}$)\footnote{By $p_d\to \frac{r_d}{2} + \frac{a\sqrt{r_d}}{2}$, we mean that $\lim_{d \to \infty} \frac{2p_d - r_2}{\sqrt{r_d}} = a$}, we have 
\begin{equation}\label{eq:stirling}
	\lim_{d \to \infty} \frac{\sqrt{2\pi r_d}}{2^{r_d+1}} {r_d \choose p_d} = e^{-a^2}.
\end{equation}
We refer to this equality as Stirling's formula. In making asymptotic comparisons, we write $f(d) \sim g(d)$ if $\lim_{d \to \infty} \frac{f(d)}{g(d)} = 1$.

\begin{lemma}\label{lem:BettiFormula}
	For $1 \leq p \leq r_d-g-2k-1$, we have
	\begin{align*}
		\kappa_{p,k+1}(\pi_k((g,\dots,g);d)) &= {r_d-2k \choose p} \cdot \frac{p}{p+k+1} \cdot \frac{\prod_{i=k}^{2k} (r_d-p-g-i)}{(r_d-2k)\prod_{i=k}^{2k-1}(r_d-p-i)}
	\end{align*}
\end{lemma}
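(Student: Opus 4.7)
The plan is to apply the explicit formula \eqref{eq:pureDiagramEntry} directly to the degree sequence defining $\pi_k((g,\ldots,g);d)$. Taking the complement of $\{1,\ldots,k+1\} \cup \{r_d+1-g-k,\ldots,r_d+1-g\}$ in $[r_d+1]$ (and prepending the implicit entry $e_0 = 0$ corresponding to the degree-zero generator) gives
\[
\ee = (0,\, k+2,\, k+3,\ldots, r_d - g - k,\, r_d - g + 2,\ldots, r_d + 1),
\]
so that $e_p = p+(k+1)$ for $1 \le p \le r_d - g - 2k - 1$ (the weight-$(k+1)$ columns) and $e_p = p+(2k+2)$ for $r_d - g - 2k \le p \le r_d - 2k - 1$ (the weight-$(2k+2)$ columns). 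The projective dimension is $n = r_d - 2k - 1$.

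For $p$ in the stated range, I will compute $\prod_{i \ne p} |e_i - e_p|$ by splitting it according to the three blocks of $\ee$: the $i = 0$ index contributes $p + k + 1$; the middle block (minus $i = p$) contributes $(p-1)!\,(r_d - g - 2k - 1 - p)!$ since $|e_i - e_p| = |i - p|$ there; and the top block, after the change of index $j = i - (r_d - g - 2k)$, contributes the product of $g$ consecutive integers $\prod_{j=0}^{g-1}(r_d - g - k - p + 1 + j) = (r_d - k - p)!/(r_d - g - k - p)!$. Substituting into \eqref{eq:pureDiagramEntry} with $n! = (r_d - 2k - 1)!$ yields
\[
\kappa_{p,k+1}(\pi_k((g,\ldots,g);d)) = \frac{(r_d - 2k - 1)!\,(r_d - g - k - p)!}{(p+k+1)(p-1)!(r_d - g - 2k - 1 - p)!(r_d - k - p)!}.
\]

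The final task is to recognize this ratio as the expression in the statement. Writing $\binom{r_d - 2k}{p}\cdot p/(r_d - 2k) = (r_d - 2k - 1)!/[(p-1)!(r_d - 2k - p)!]$ and expanding the two products as $\prod_{i=k}^{2k}(r_d - p - g - i) = (r_d - g - k - p)!/(r_d - g - 2k - 1 - p)!$ and $\prod_{i=k}^{2k-1}(r_d - p - i) = (r_d - k - p)!/(r_d - 2k - p)!$, the factor $(r_d - 2k - p)!$ cancels and the two forms agree. This last step is purely arithmetic; the only real pitfall is off-by-one errors in the index ranges of these factorial-to-product conversions, which I expect to be the one bookkeeping obstacle to watch.
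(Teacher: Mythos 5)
Your proposal is correct and follows essentially the same route as the paper: read off the degree sequence $(0,k+2,\dots,r_d-g-k,r_d-g+2,\dots,r_d+1)$, apply the pure-diagram formula \eqref{eq:pureDiagramEntry} by splitting the product $\prod_{i\ne p}|e_i-e_p|$ over the three blocks, and then match the resulting factorial ratio to the stated expression via the same product-to-factorial conversions. The index bookkeeping you flag as the main pitfall all checks out.
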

\begin{proof}
	The degree sequence of $\pi_k((g,\dots,g);d)$ is
	\[
		\ee_k((g,\dots,g);d) = (0,k+2,k+3,\dots,p+k+1,\dots,r_d-k-g,r_d-g+2,\dots,r+1).
	\]
	So by definition,
	\begin{equation}\label{eq:defInProof}
		\kappa_{p,k+1}((g,\dots,g);d) = \frac{(r_d-2k-1)!}{(p+k+1)(p-1)!(r_d-2k-g-p-1)!\prod_{i=0}^{g-1} (r_d-p-k-i)}.
	\end{equation}
	Then since 
	\begin{align*}
		(r_d-2k-g-p-1)!\prod_{i=0}^{g-1} (r_d-p-k-i) &= \frac{(r_d-p-k)!}{\prod_{i=k}^{2k}(r_d-g-p-i)}
	\end{align*}
	we get
	\begin{align*}
		\kappa_{p,k+1}(\pi_k((g,\dots,g);d)) &= \frac{(r_d-2k-1)!}{(p-1)!(r_d-p-k)!}\cdot \frac{\prod_{i=k}^{2k}(r_d-g-p-i)}{p+k+1}.
	\end{align*}
	To conclude, we notice that
	\begin{align*}
		\frac{(r_d-2k-1)!}{(p-1)!(r_d-p-k)!} &= {r_d-2k-1 \choose p-1}\cdot  \prod_{i=k}^{2k-1}\frac{1}{(r_d-p-i)} \\
		&= {r_d-2k \choose p} \frac{p}{(r_d-2k)}\prod_{i=k}^{2k-1}\frac{1}{(r_d-p-i)}.
	\end{align*}
	The second equality above follows from the relation ${n \choose k} = \frac{n}{k}{n-1 \choose k-1}$.
\end{proof}

\begin{lemma}\label{lem:growthRate}
	For $1 \leq p_d \leq r_d-g-2k-1$, we have
	\[
		\kappa_{p_d,k+1}(\pi_k((g,\dots,g);d)) \sim \frac{1}{2^{2k+1}}{r_d\choose p_d}
	\]
	as $p_d \to \frac{r_d}{2} + \frac{a\sqrt{r_d}}{2}$.
\end{lemma}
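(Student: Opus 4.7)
The plan is to apply the explicit formula from Lemma \ref{lem:BettiFormula} and show that, term by term, the three factors multiplying $\binom{r_d-2k}{p_d}$ have well-understood limits, while $\binom{r_d-2k}{p_d}$ itself compares predictably to $\binom{r_d}{p_d}$. Throughout, the key observation is that under the assumption $p_d \to \frac{r_d}{2} + \frac{a\sqrt{r_d}}{2}$, we have $p_d/r_d \to 1/2$, so any expression of the form $r_d - p_d - C$ with $C$ bounded satisfies $(r_d-p_d-C)/(r_d/2) \to 1$ as $d \to \infty$.

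First I would reduce $\binom{r_d-2k}{p_d}$ to $\binom{r_d}{p_d}$ via the identity
\[
	\frac{\binom{r_d-2k}{p_d}}{\binom{r_d}{p_d}} = \prod_{i=0}^{2k-1} \frac{r_d-p_d-i}{r_d-i},
\]
and observe that each of the $2k$ factors tends to $\tfrac{1}{2}$, giving a limit of $1/2^{2k}$. Next, the factor $\frac{p_d}{p_d+k+1}$ trivially tends to $1$ since $p_d \to \infty$. Finally, for the rational-function factor
\[
	\frac{\prod_{i=k}^{2k}(r_d - p_d - g - i)}{(r_d - 2k)\prod_{i=k}^{2k-1}(r_d - p_d - i)},
\]
the numerator is a product of $k+1$ terms each asymptotic to $r_d/2$, hence asymptotic to $r_d^{k+1}/2^{k+1}$, while the denominator is $(r_d-2k)$ times a product of $k$ terms each asymptotic to $r_d/2$, hence asymptotic to $r_d^{k+1}/2^{k}$. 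The ratio therefore tends to $1/2$.

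Multiplying the three limits $1/2^{2k}$, $1$, and $1/2$ yields the overall constant $1/2^{2k+1}$, which is exactly the claimed asymptotic ratio. I would assemble these estimates into a single application of $\sim$: since each factor's limit exists (and none is zero), the product of asymptotics is the asymptotic of the product, and the conclusion $\kappa_{p_d,k+1}(\pi_k((g,\dots,g);d)) \sim \frac{1}{2^{2k+1}}\binom{r_d}{p_d}$ follows.

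There is no real obstacle here; the argument is a routine leading-order computation once Lemma \ref{lem:BettiFormula} is in hand. The only thing one should be careful about is verifying that the asymptotic regime $p_d \sim r_d/2 + a\sqrt{r_d}/2$ (rather than a stricter $p_d = r_d/2$) still forces each factor of the form $(r_d - p_d - C)/(r_d/2)$ to tend to $1$; this holds because $\sqrt{r_d}/r_d \to 0$, so the $a\sqrt{r_d}/2$ correction is negligible in these ratios. With that check, the proof is complete.
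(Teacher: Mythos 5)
Your proposal is correct and follows essentially the same route as the paper: apply Lemma \ref{lem:BettiFormula} and compute the limit of each factor, with the binomial ratio contributing $1/2^{2k}$ and the remaining rational factor contributing $1/2$. The only cosmetic difference is that the paper pairs each numerator term $(r_d-p_d-g-i)$ with a denominator term $(r_d-p_d-i)$ so that all but one ratio tends to $1$, whereas you estimate numerator and denominator separately; both give the same leading constant.
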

\begin{proof}
	We analyze the terms appearing in the statement of Lemma \ref{lem:BettiFormula}. Notice that for $1\leq i \leq k$, we have
	\[
		\lim_{d \to \infty} \frac{r_d-p_d-g-i}{r_d-p_d-i} = 1 ~~~~ \text{ and } ~~~~ \lim_{d \to \infty} \frac{p_d}{p_d+k+1} = 1.
	\]
	Furthermore, 
	\[
		\lim_{d \to \infty} \frac{r_d-p_d-g-2k}{r_d-2k} = \frac{1}{2}.
	\]
	Finally, the equality
	\[
		\lim_{d \to \infty} \frac{{r_d-2k\choose p_d}}{{r_d \choose p_d}} = \frac{1}{2^{2k}}
	\]
	completes the proof.
\end{proof}

\begin{theorem}\label{thm:normalDistribution}
	Let $C$ be a curve of genus $g \geq 1$. Let $L_d$ be a line bundle of degree $d$ on $C$, so $r_d = d-g$. Fix a sequence $\{p_d\}$ of integers such that $p_d \to \frac{r_d}{2} + \frac{a\sqrt{r_d}}{2}$ for some real number $a$. Then 
	\[
		F_{g,k}(d)\kappa_{p_d,k+1}(\Sigma_k(C,L_d)) \to e^{-a^2}
	\]
	where $F_{g,k}(d) = \frac{(k+1)!}{2^{r_d - 2k} r_d^{k}} \cdot \sqrt{\frac{2\pi}{r_d}}$.
\end{theorem}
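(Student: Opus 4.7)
The plan is to combine the Boij-S\"{o}derberg decomposition from Theorem \ref{thm:Main} with the explicit asymptotics of the limiting pure diagram $\pi_k((g, \ldots, g); d)$, together with Stirling's formula and the degree formula from Lemma \ref{lem:degreeOfSecant}.

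First I would expand the decomposition to get
$$\kappa_{p_d, k+1}(\Sigma_k(C, L_d)) = \deg(\Sigma_k(C, L_d)) \sum_{\ii} c_{\ii; d}\, \kappa_{p_d, k+1}(\pi_k(\ii; d)),$$
where by Theorem \ref{thm:Main}, $c_{(g, \ldots, g); d} \to 1$ and $c_{\ii; d} \to 0$ for all other $\ii$. For the dominant summand, Lemma \ref{lem:growthRate} gives $\kappa_{p_d, k+1}(\pi_k((g, \ldots, g); d)) \sim \binom{r_d}{p_d}/2^{2k+1}$, Stirling's formula \eqref{eq:stirling} gives $\binom{r_d}{p_d} \sim 2^{r_d+1} e^{-a^2}/\sqrt{2 \pi r_d}$, and Lemma \ref{lem:degreeOfSecant} gives $\deg(\Sigma_k(C, L_d)) \sim r_d^{k+1}/(k+1)!$. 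Multiplying these together yields
$$\deg(\Sigma_k(C, L_d)) \cdot c_{(g, \ldots, g); d} \cdot \kappa_{p_d, k+1}(\pi_k((g, \ldots, g); d)) \sim \frac{2^{r_d - 2k} r_d^k}{(k+1)!} \sqrt{\frac{r_d}{2 \pi}} \cdot e^{-a^2} = \frac{e^{-a^2}}{F_{g, k}(d)},$$
which is the target limit.

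To complete the proof, I would show that the remaining terms contribute lower order. For this I would extend Lemma \ref{lem:BettiFormula} by applying \eqref{eq:pureDiagramEntry} directly to the general degree sequence $\ee_k(\ii; d)$, obtaining for $p$ in the linear strand of row $k+1$
$$\kappa_{p, k+1}(\pi_k(\ii; d)) = \frac{(r_d - 2k - 1)! \prod_{j=0}^{k}(r_d - k - p - i_j - j)}{(p + k + 1)(p - 1)!(r_d - p - k)!}.$$
Since each $i_j \leq g$ is bounded and $r_d - p_d - k \sim r_d/2 \to \infty$, every factor $(r_d - k - p_d - i_j - j)$ differs from the corresponding factor for $\ii = (g, \ldots, g)$ by $1 + O(1/r_d)$. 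The same manipulation as in Lemma \ref{lem:growthRate} then shows $\kappa_{p_d, k+1}(\pi_k(\ii; d)) = O(\binom{r_d}{p_d})$ uniformly in $\ii$. Since $c_{\ii; d} \to 0$ for $\ii \neq (g, \ldots, g)$ and there are only finitely many such $\ii$, the total contribution of these terms is $o(e^{-a^2}/F_{g, k}(d))$.

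The main obstacle is establishing the general formula above and its asymptotic analysis. The formula itself is a straightforward application of \eqref{eq:pureDiagramEntry} once one writes out the degree sequence carefully, but some technicalities remain: one must confirm that $p_d$ lies in the linear strand of row $k+1$ for every relevant $\ii$ when $d$ is large (which holds because $p_d \sim r_d/2$ lies well below the first ``jump'' at position $r_d - 2k - i_k - 1$), and one must verify that the $\ii$-dependent constants introduced do not spoil the uniform asymptotic comparison needed for the error bound.
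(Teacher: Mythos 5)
Your proposal is correct and follows essentially the same route as the paper: expand the Boij--S\"{o}derberg decomposition, isolate the dominant term $\pi_k((g,\dots,g);d)$ via Theorem \ref{thm:Main}, and combine Lemma \ref{lem:growthRate}, Lemma \ref{lem:degreeOfSecant}, and Stirling's formula. The only difference is that you write out explicitly the uniform bound $\kappa_{p_d,k+1}(\pi_k(\ii;d)) = O\bigl(\binom{r_d}{p_d}\bigr)$ for the non-dominant diagrams (your general formula from \eqref{eq:pureDiagramEntry} is correct), a step the paper only sketches with a one-sentence appeal to ``a similar calculation.''
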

\begin{proof}
	Consider a Boij-S\"{o}derberg decomposition of the normalized Betti table of $\Sigma_k(C,L_d)$ 
	\[
		\overline{\beta}(\Sigma_k(C,L_d)) = \sum c_{\ii;d} \pi_k(\ii;d).
	\]
	Then we may write the $(p_d,k+1)$ Betti number as
	\[
		\kappa_{p_d,k+1}(\overline{\beta}(\Sigma_k(C,L_d))) = \sum c_{\ii;d} \kappa_{p_d,k+1}(\pi_k(\ii;d)).
	\]
	Notice that that $\kappa_{p_d,k+1}(\pi_k(g,\dots,g);d)$ grows at least as fast each of the $\kappa_{p_d,k+1}(\pi_k(\ii;d))$. To see this, one performs a similar calculation as in Lemmas \ref{lem:BettiFormula} and \ref{lem:growthRate}, but this is less involved as one needs only find the growth rate rather than the coefficient\footnote{Indeed, analyzing the proof of Lemma \ref{lem:BettiFormula}, one sees that for other diagrams, the formula in Equation \eqref{eq:defInProof} will be the same except for the product in the denominator. So one obtains a similar growth rate for other diagrams.}. So as $d \to \infty$, Theorem \ref{thm:Main} implies that
	\[
		\kappa_{p_d,k+1}(\overline{\beta}(\Sigma_k(C,L_d))) \sim \kappa_{p_d,k+1}(\pi_d((g,\dots,g);d))
	\]
	since $\lim c_{\ii ;d} = 0$ for $\ii \not= (g,\dots,g)$. Recall that $\overline{\beta}(\Sigma_k(C,L_d)) = \frac{1}{\deg(\Sigma_k(C,L_d))} \beta(\Sigma_k(C,L_d))$. By Lemma \ref{lem:degreeOfSecant}, we have $\deg(\Sigma_k(C,L_d)) = \frac{1}{(k+1)!}r_d^{k+1} + O(r_d^k)$. Thus
	\[
		\kappa_{p_d,k+1}(\Sigma_k(C),L_d) \sim \frac{r_d^{k+1}}{(k+1)!}\kappa_{p_d,k+1}(\pi_d((g,\dots,g);d)) \sim \frac{r_d^{k+1}}{(k+1)!2^{2k+1}}{r_d \choose p_d}.
	\]
	Then the statement follows from Stirling's formula \eqref{eq:stirling}.
\end{proof}

\begin{remark}\label{rmk:NoExplicitFormula}
	Normal distribution of the Betti numbers of curves (i.e. $k=0$ above) was proven in \cite{EinErmanLaz} using an explicit formula for the Betti numbers. Our proof does not require an explicit formula, and in fact, the author is not aware of the existence of an explicit formula for the syzygies of $\Sigma_k(C,L_d)$ in the weight $(k+1)$ row. Thus, our argument shows that normal distribution of Betti numbers for curves follows from the asymptotic purity statement in \cite{Erman}.
\end{remark}

\bibliographystyle{alpha}
\bibliography{AsymptoticSyzygiesSecants}{}

\begin{thebibliography}{BEGY20}

\bibitem[AN10]{AproduNagel}
M.~Aprodu and J.~Nagel.
\newblock {\em {Koszul Cohomology and Algebraic Geometry}}, volume~52.
\newblock American Mathematical Society, 2010.

\bibitem[BEGY20]{BEGY20}
J.~Bruce, D.~Erman, S.~Golstein, and J.~Yang.
\newblock {Conjectures and computations about Veronese syzygies}.
\newblock {\em Exp. Math.}, 29(4):398--413, 2020.

\bibitem[Bru]{Bruce}
J.~Bruce.
\newblock {The quantitative behavior of asymptotic syzygies for Hirzebruch
  surfaces}.
\newblock to appear in \emph{J. Commut. Algebra}.

\bibitem[CJ08]{CJ01}
M~Catalano-Johnson.
\newblock {The homogeneous ideals of higher secant varieties}.
\newblock {\em J. Pure Appl. Algebra}, 158(2-3):123--129, 2008.

\bibitem[Dur01]{Durrett}
R.~Durrett.
\newblock {\em {Probability: Theory and Examples}}.
\newblock Academic Press, San Diego, {Third} edition, 2001.

\bibitem[EE17]{EisenbudErman}
D.~Eisenbud and D.~Erman.
\newblock {Categorified duality in Boij-S\"{o}derberg theory and invariants of
  free complexes}.
\newblock {\em J. Eur. Math. Soc. (JEMS)}, 19:2657--2695, 2017.

\bibitem[EEL21]{EinErmanLaz}
L.~Ein, D.~Erman, and R.~Lazarsfeld.
\newblock {Asymptotics of random Betti tables}.
\newblock {\em J. Reine Angew. Math. (Crelle's Journal), to appear}, 2021.

\bibitem[Eis88]{Eis88}
D.~Eisenbud.
\newblock {Linear sections of determinantal varieties}.
\newblock {\em Amer. J. Math.}, 110(3):541--575, 1988.

\bibitem[Eis05]{GeoOfSyzygy}
D.~Eisenbud.
\newblock {\em Geometry of Syzygies}, volume 229 of {\em Graduate Texts in
  Mathematics}.
\newblock Springer, New York, NY, 2005.

\bibitem[EL12]{EinLazAsymptotic}
L.~Ein and R.~Lazarsfeld.
\newblock {Asymptotic syzygies of algebraic varieties}.
\newblock {\em Invent. Math.}, 190(3):603--646, 2012.

\bibitem[ENP20]{EinNiuPark}
L.~Ein, W.~Niu, and J.~Park.
\newblock {Singularities and syzygies of secant varieties}.
\newblock {\em Invent. Math.}, 222:615--665, 2020.

\bibitem[Erm15]{Erman}
D.~Erman.
\newblock {The Betti table of a high degree curve is asymptotically pure}.
\newblock In C.~Hacon, M.~Mustata, and M.~Popa, editors, {\em Recent Advances
  in Algebraic Geometry}. Cambridge University Press, 2015.

\bibitem[ES09]{EisenbudSchreyer}
D.~Eisenbud and F.~Schreyer.
\newblock {Betti numbers of graded modules and cohomology of vector bundles}.
\newblock {\em J. Amer. Math. Soc.}, 22(3):859--888, 2009.

\bibitem[EY18]{ErmanYang}
D.~Erman and J.~Yang.
\newblock {Random flag complexes and asymptotic syzygies}.
\newblock {\em Algebra Number Theory}, 12(9):2151--2166, 2018.

\bibitem[Fl{\o}12]{Floystad}
G.~Fl{\o}ystad.
\newblock {Boij-S\"{o}derberg Theory: Introduction and Survey}.
\newblock In C.~Francisco, L.~Klinger, S.~Sather-Wagstaff, and J.~Vassilev,
  editors, {\em Progress in Commutative Algebra, I}. De Gruyter, 2012.

\bibitem[Gre84]{Green84}
M.~Green.
\newblock {Koszul cohomology and the geometry of projective varieties}.
\newblock {\em J. Differential Geom.}, 19(1):125--171, 1984.

\bibitem[GvBH04]{GH04}
H.C. Graf~von Bothmer and K.~Hulek.
\newblock {Geometric syzygies of elliptic normal curves and their secant
  varieties}.
\newblock {\em Manuscripta Math.}, 113(1):35--68, 2004.

\bibitem[HM85]{HunekeMiller}
C.~Huneke and M.~Miller.
\newblock {A note on the multiplicity of Cohen-Macaulay algebras with pure
  resolutions}.
\newblock {\em Canad. J. Math.}, 37(6):1149--1162, 1985.

\bibitem[Sou04]{Soule}
C.~Soule.
\newblock {Secant varieties and successive minima}.
\newblock {\em J. Algebraic Geom.}, 13:323--341, 2004.

\bibitem[SV09a]{SidmanVermeire}
J.~Sidman and P.~Vermeire.
\newblock {Equations defining secant varieties}.
\newblock In G.~Fl{\o}ystad, T.~Johnsen, and A.~L. Knutsen, editors, {\em
  Combinatorial Aspects of Commutative Algebra and Algebraic Geometry}.
  Springer, Berlin, 2009.

\bibitem[SV09b]{SidmanVermeire2}
J.~Sidman and P.~Vermeire.
\newblock {Syzygies of the secant variety of a curve}.
\newblock {\em Algebra Number Theory}, 3(4):445--465, 2009.

\bibitem[Ver01]{Ver1}
P.~Vermeire.
\newblock {Some results on secant varieties leading to a geometric flip
  constriction}.
\newblock {\em Compos. Math.}, 125(3):263--282, 2001.

\bibitem[Ver02]{Ver2}
P.~Vermeire.
\newblock {On the regularity of powers of ideal sheaves}.
\newblock {\em Compos. Math.}, 131(2):161--172, 2002.

\bibitem[Ver08]{Ver3}
P.~Vermeire.
\newblock {Regularity and normality of the secant variety to a projective
  curve}.
\newblock {\em J. Algebra}, 319(3):1264--1270, 2008.

\bibitem[Ver12]{Ver4}
P.~Vermeire.
\newblock {Equations and syzygies of the first secant variety to a smooth
  curve}.
\newblock {\em Proc. Amer. Math. Soc.}, 140(8):2639--2646, 2012.

\end{thebibliography}

\end{document}